\newtheorem{definition}{Definition}[section]
\newtheorem{theorem}[definition]{Theorem}
\newtheorem{lemma}[definition]{Lemma}
\newtheorem{corollary}[definition]{Corollary}
\newcommand{\lt}{\left}
\newcommand{\rt}{\right}
\newcommand\norm[1]{\left\lVert#1\right\rVert}
\newcommand{\abs}[1]{|#1|}
\newcommand{\ls}[1]{\lt(#1\rt)}
\newcommand{\bmat}[1]{\begin{bmatrix}#1\end{bmatrix}}
\DeclareMathOperator{\rank}{rank}
\DeclareMathOperator{\vol}{vol}
\newcommand{\RR}{\mathbb{R}}
\newcommand{\jset}{\mathcal{J}}
\newcommand{\eps}{\varepsilon}
\newcommand{\AI}{\mathbb{A}}
\newcommand{\B}{\mathcal{B}}
\newcommand{\N}{\mathcal{N}}
\newcommand{\e}{\+e}
\title{Rank Revealing Gaussian Elimination by the\\Maximum Volume Concept}
\date{\vspace{-5ex}}
\author{Lukas Schork\thanks{L.Schork@ed.ac.uk} \and Jacek
  Gondzio\thanks{J.Gondzio@ed.ac.uk}}
\begin{document}
\maketitle
\begin{center}
\textit{School of Mathematics, University of Edinburgh, Edinburgh EH9 3FD,
  Scotland, UK} \\[5ex]
\textbf{Technical Report ERGO 18-002, February 5, 2018}
\end{center}
\vspace{3ex}

\begin{abstract}
  A Gaussian elimination algorithm is presented that reveals the numerical rank
  of a matrix by yielding small entries in the Schur complement. The algorithm
  uses the maximum volume concept to find a square nonsingular submatrix of
  maximum dimension. The bounds on the revealed singular values are similar to
  the best known bounds for rank revealing $LU$ factorization, but in contrast
  to existing methods the algorithm does not make use of the normal matrix. An
  implementation for dense matrices is described whose computational cost is
  roughly twice the cost of an $LU$ factorization with complete pivoting.
  Because of its flexibility in choosing pivot elements, the algorithm is
  amenable to implementation with blocked memory access and for sparse matrices.
\end{abstract}

\section{Introduction}
\label{sec:introduction}

This paper is concerned with the problem to determine the rank of a matrix in
the numerical sense. Given $A\in\RR^{m\times n}$ and a tolerance $\eps$, the
task is to determine an index $r$ such that $\sigma_r\ge\eps$ and
$\sigma_{r+1}=\mathcal{O}(\eps)$, where
$\sigma_1\ge\ldots\ge\sigma_d\ge\sigma_{d+1}:=0$ ($d=\min(m,n)$) are the
singular values of $A$. Our definition of numerical rank relaxes the condition
$\sigma_r\ge\eps>\sigma_{r+1}$ since the latter can only be achieved by
computing the singular values. Additionally to the rank $r$, we want to identify
an $r\times r$ submatrix of $A$ whose minimum singular value is not too much
smaller than $\sigma_r$.

It is well known that Gaussian elimination with complete pivoting may not detect
a near singularity. For the example from \cite{peters1970},
\begin{equation}
  \label{eq:peters}
  A = \bmat{1 & -1 & \cdots & -1 & -1\\
    & 1 & & & -1 \\ & & \ddots & & \vdots \\ & & & 1 & -1 \\ & & & & 1}
  \in \RR^{m\times m},
\end{equation}
complete pivoting allows to choose the diagonal elements as pivots, so that no
eliminations are needed and $A$ is determined to be of full rank. It is not
revealed that $\sigma_m(A)=\mathcal{O}(2^{-m})$ (see \cite[Section~5]{pan2000})
and the numerical rank of $A$ to be $m-1$ for $m$ moderately large.

The algorithm presented in this paper is based on Gaussian elimination and is
rank revealing in the above definition. It finds a nonsingular submatrix
$A_{11}$ of $A$ such that
\begin{equation*}
  \norm{A/A_{11}}_C\le\beta \quad \text{and} \quad
  \norm{A_{11}^{-1}}_C\le\beta^{-1}
\end{equation*}
for a given parameter $\beta>0$. Here $\norm{\cdot}_C$ is the maximum absolute
entry of a matrix and $A/A_{11}$ is the Schur complement of $A_{11}$ in $A$. It
will be shown that for $\beta=\max(m,n)\eps$ the dimension of $A_{11}$ reveals
the numerical rank of $A$. A lower bound on the minimum singular value of
$A_{11}$ will be derived in terms of $\sigma_r(A)$. Applied to the matrix
\eqref{eq:peters}, the algorithm selects the upper right $(m-1)\times(m-1)$
block as $A_{11}$ for which $\norm{A/A_{11}}_C=\mathcal{O}(2^{-m})$ and
$\sigma_{\min}(A_{11})\approx\sigma_{m-1}(A)$.

To find $A_{11}$, the algorithm selects an $m\times m$ basis matrix $\AI_\B$ of
local maximum volume in $\AI=\bmat{A& \beta I_m}$, where $I_m$ is the identity
matrix of dimension $m$. The concept of maximum volume has been used before in
rank revealing factorizations and related topics, see
\cite{pan2000,goreinov2001,goreinov2010} and the references therein. The novelty
of our algorithm is to work on the matrix $\AI$ rather than $A$ itself. $A_{11}$
will be defined by means of the columns of $A$ and the columns of $\beta I_m$
which compose $\AI_\B$. It will be shown that $\norm{A/A_{11}}_2$ and
$\sigma_{min}(A_{11})$ satisfy bounds in terms of the singular values of $A$
that are very similar to the best known bounds for the rank revealing $LU$
factorization \cite{pan2000}.

A rank revealing factorization based on the maximum volume concept that yields a
square nonsingular submatrix has also been derived by Pan \cite{pan2000}. Pan's
method first chooses a column subset of $A$ by utilizing the normal matrix
$A^TA$, and then chooses a square submatrix within these columns. A detailed
comparison to our method is given. While the resulting submatrices have the same
rank revealing properties, an advantage of our method is not to use the normal
matrix but instead to use pivot operations on $\AI$ only. This is particularly
relevant with regard to an implementation for sparse matrices.

An implementation of the proposed algorithm for dense matrices is described. It
requires roughly twice the computational cost than an $LU$ factorization of
$\AI$ with complete pivoting. Comparisons to the singular value decomposition on
a set of rank deficient matrices show that the rank detection is reliable and
that the condition number of the selected submatrices is close to
$\sigma_1(A)/\sigma_r(A)$.

Throughout the paper $A$ is an $m\times n$ matrix and $A_{11}$ is a square
nonsingular submatrix. It is assumed that $A$ has been permuted so that
\begin{equation}
  \label{eq:partitionedA}
  A = \bmat{A_{11} & A_{12} \\ A_{21} & A_{22}}.
\end{equation}
The Schur complement of $A_{11}$ in $A$ is
\begin{equation*}
  A/A_{11} = A_{22} - A_{21}A_{11}^{-1}A_{12}.
\end{equation*}
$\sigma_k(\cdot)$ denotes the $k$-th singular value of a matrix, where the
singular values are ordered nonincreasingly. $\norm{\cdot}_2$ and
$\norm{\cdot}_C$ are the maximum singular value and the maximum absolute entry
norm of a matrix. They satisfy the relation
\begin{equation*}
  \norm{A}_C \le \norm{A}_2 \le \sqrt{mn}\norm{A}_C.
\end{equation*}
For an index set $\jset$, $A_\jset$ is the matrix composed of the columns of $A$
indexed by $\jset$. A basis $\B$ for $\AI\in\RR^{m\times(n+m)}$ is an index set
such that the basis matrix $\AI_\B$ is square and nonsingular (requiring that
$\AI$ has rank $m$). Associated with $\B$ is the nonbasic set
$\N=\{1,\ldots,n+m\}\setminus\B$. Vectors are notated in bold lower case, where
$\e_j$ is the $j$-th unit vector. Expression like $\abs{A}$ and $\abs{\+b}$ are
meant componentwise.

\section{Maximum Volume Concept}
\label{sec:maxvolume}

The volume of a matrix of arbitrary dimension and rank is introduced in
\cite{ben-israel1992}. This paper uses the definition from \cite{pan2000}, which
differs in that the volume of a rank deficient matrix is zero.

\begin{definition}
  \label{def:volume}
  For $A\in\RR^{m\times n}$ with singular values
  $\sigma_1\ge\ldots\ge\sigma_d\ge0$ ($d=\min(m,n)$), the volume of $A$ is
  defined by
  \begin{equation*}
    \vol(A) = \sigma_1\cdots\sigma_d.
  \end{equation*}
\end{definition}

In particular, the volume of a square matrix is the absolute value of its
determinant.

\begin{definition}
  \label{def:maxvolume}
  Let $A\in\RR^{m\times n}$ and $\rho\ge1$.
  \begin{enumerate}
  \item Let $B$ be a $k\times k$ submatrix of $A$. $\vol(B)(\neq0)$ is said to
    be a global $\rho$-maximum volume in $A$ if
    \begin{equation}
      \label{eq:maxvolume}
      \rho\vol(B) \ge \vol(B')
    \end{equation}
    for all $k\times k$ submatrices $B'$ of $A$.
  \item Let $B$ be formed by k columns (rows) of $A$. $\vol(B)(\neq0)$ is said
    to be a local $\rho$-maximum volume in $A$ if \eqref{eq:maxvolume}
    holds for any $B'$ that is obtained by replacing one column (row) of $B$ by
    a column (row) of $A$ which is not in $B$.
  \item Let $B$ be a $k\times k$ submatrix ($k<\min(m,n)$) of $A$.
    $\vol(B)(\neq0)$ is said to be a local $\rho$-maximum volume in $A$ if it is
    a global $\rho$-maximum volume in all $(k+1)\times(k+1)$ submatrices of $A$
    which contain $B$.
  \end{enumerate}
\end{definition}

The important concept in the theory of rank revealing factorizations is the
local maximum volume. The definition \ref{def:maxvolume}(ii) is from
\cite{pan2000} and \ref{def:maxvolume}(iii) is the natural extension to square
submatrices of any dimension. It is equivalent to saying that $A_{11}$ has local
$\rho$-maximum volume in \eqref{eq:partitionedA} if the volume of the $(1,1)$
block cannot be increased by more than a factor $\rho$ by interchanging two
columns and/or two rows.

Finding a submatrix of local maximum volume will make use of column and row
exchanges. The following lemmas provide fomulas for the change of volume when a
column and/or row is replaced in a square nonsingular matrix.

\begin{lemma}
  \label{lem:colchange}
  Let $A_{11}$ be $k\times k$ nonsingular and $A_{11}'$ be obtained by replacing
  column $j$ by the vector $\+b$. Then
  \begin{equation*}
    \frac{\vol(A_{11}')}{\vol(A_{11})} = \abs{A_{11}^{-1}\+b}_j.
  \end{equation*}
  In particular, $A_{11}$ in \eqref{eq:partitionedA} has local $\rho$-maximum
  volume in its block row and block column if and only if
  $\norm{A_{11}^{-1}A_{12}}_C\le\rho$ and $\norm{A_{21}A_{11}^{-1}}_C\le\rho$,
  respectively.
\end{lemma}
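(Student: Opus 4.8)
The plan is to reduce everything to the square case and compute a determinant ratio. Since both $A_{11}$ and $A_{11}'$ are $k\times k$, Definition~\ref{def:volume} gives $\vol(A_{11})=\abs{\det(A_{11})}$ and $\vol(A_{11}')=\abs{\det(A_{11}')}$, so the claim is equivalent to $\abs{\det(A_{11}')}=\abs{\det(A_{11})}\cdot\abs{A_{11}^{-1}\+b}_j$. The cleanest route is Cramer's rule: the $j$-th component of the solution $\+x=A_{11}^{-1}\+b$ of $A_{11}\+x=\+b$ is $x_j=\det(A_{11}')/\det(A_{11})$, where $A_{11}'$ is exactly $A_{11}$ with its $j$-th column replaced by $\+b$. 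Taking absolute values yields the formula at once. Alternatively one may write $A_{11}'=A_{11}\bigl(I+(A_{11}^{-1}\+b-\e_j)\e_j^T\bigr)$ and apply the matrix determinant lemma $\det(I+\+u\+v^T)=1+\+v^T\+u$ with $\+u=A_{11}^{-1}\+b-\e_j$ and $\+v=\e_j$, which collapses to $\det(A_{11}')=\det(A_{11})\,(A_{11}^{-1}\+b)_j$.

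For the ``in particular'' statement I would read off the block-row case directly from the formula. The block row is $\bmat{A_{11} & A_{12}}$, a $k\times n$ matrix whose first $k$ columns form $A_{11}$; thus $A_{11}$ is a submatrix ``formed by $k$ columns'' in the sense of Definition~\ref{def:maxvolume}(ii), and the admissible exchanges replace a column of $A_{11}$ by a column of $A_{12}$. Applying the column-change formula with $\+b$ equal to the $\ell$-th column of $A_{12}$ shows that replacing column $j$ of $A_{11}$ multiplies the volume by $\abs{A_{11}^{-1}A_{12}}_{j\ell}$, the magnitude of the $(j,\ell)$ entry of $A_{11}^{-1}A_{12}$. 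Local $\rho$-maximum volume in the block row is precisely the requirement that every such ratio be at most $\rho$, and the maximum over all $j,\ell$ of these entries is by definition $\norm{A_{11}^{-1}A_{12}}_C$; hence the block-row condition is equivalent to $\norm{A_{11}^{-1}A_{12}}_C\le\rho$.

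The block-column case I would obtain by transposition. Because the singular values of a matrix and its transpose coincide, $\vol(M)=\vol(M^T)$, so the column-change formula immediately gives a row analogue: replacing a row of a nonsingular square matrix produces the corresponding volume ratio with the left inverse. Applying this to the block column $\bmat{A_{11}\\ A_{21}}$, where $A_{11}$ occupies the first $k$ rows and the admissible exchanges replace a row of $A_{11}$ by a row of $A_{21}$, shows that replacing row $i$ by the $\ell$-th row of $A_{21}$ scales the volume by $\abs{A_{21}A_{11}^{-1}}_{\ell i}$. Requiring all these ratios to be at most $\rho$ is exactly $\norm{A_{21}A_{11}^{-1}}_C\le\rho$, completing the equivalence.

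The determinant computation itself is routine once Cramer's rule or the rank-one update identity is invoked, so I do not expect a genuine obstacle there. The point that requires care is the bookkeeping in the second half: correctly identifying $A_{11}$ as a column (resp.\ row) submatrix of the block row (resp.\ block column) so that Definition~\ref{def:maxvolume}(ii) applies, and checking that imposing the volume bound over \emph{every} single-column and single-row exchange is the same as bounding the largest entry of $A_{11}^{-1}A_{12}$ and $A_{21}A_{11}^{-1}$. The transpose reduction for the row case should be stated explicitly, since Lemma~\ref{lem:colchange} is phrased only for column replacement.
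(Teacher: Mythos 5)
Your proposal is correct and takes essentially the same approach as the paper: your ``alternative'' route via the factorization $A_{11}' = A_{11}\bigl(I + (A_{11}^{-1}\+b-\e_j)\e_j^T\bigr)$ is exactly the paper's proof (the paper evaluates the determinant of the second factor as that of the identity with column $j$ replaced by $A_{11}^{-1}\+b$, rather than invoking the matrix determinant lemma), and your primary route via Cramer's rule is simply a direct citation of the same identity. Your explicit bookkeeping for the ``in particular'' clause---column exchanges against $A_{12}$ within the block row, and the transposition argument giving the ratio $\abs{A_{21}A_{11}^{-1}}_{\ell i}$ for row exchanges in the block column---is correct; the paper leaves this step implicit as an immediate consequence of the formula.
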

\begin{proof}
  \begin{align*}
    A_{11}' &= A_{11} - A_{11}\e_j\e_j^T + \+b\e_j^T \\
    &= A_{11} (I_k - \e_j\e_j^T + A_{11}^{-1}\+b\e_j^T).
  \end{align*}
  The expression in parenthesis is the identity matrix with column $j$ replaced
  by $A_{11}^{-1}\+b$. Therefore
  \begin{align*}
    \det(A_{11}') &= \det(A_{11}) \det(I_k - \e_j\e_j^T + A_{11}^{-1}\+b\e_j^T)
    \\
    &= \det(A_{11}) (A_{11}^{-1}\+b)_j
  \end{align*}
  and taking absolute values completes the proof.
\end{proof}

\begin{lemma}
  \label{lem:rowcolremove}
  Let $\hat{A}$ be square and nonsingular and $B$ be obtained by removing row
  $i$ and column $j$. Then
  \begin{equation*}
    \frac{\vol(B)}{\vol(\hat{A})} = \abs{\hat{A}^{-1}}_{j,i}.
  \end{equation*}
  In particular, $B$ has $\rho$-maximum volume in $\hat{A}$ if and only if
  $\rho\abs{\hat{A}^{-1}}_{j,i}\ge\norm{\hat{A}^{-1}}_C$.
\end{lemma}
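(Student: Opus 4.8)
The plan is to reduce everything to the classical cofactor expansion of the inverse, using that the volume of a square matrix is the absolute value of its determinant, as noted after Definition~\ref{def:volume}. Write $\hat{A}$ as an $n\times n$ matrix, so that $B$ is the $(n-1)\times(n-1)$ matrix obtained by deleting row $i$ and column $j$, and $\vol(\hat{A})=\abs{\det(\hat{A})}$, $\vol(B)=\abs{\det(B)}$.

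First I would identify $\det(B)$ with a minor of $\hat{A}$: by definition the $(i,j)$ minor $M_{ij}$ of $\hat{A}$ is exactly $\det(B)$, and the corresponding cofactor is $C_{ij}=(-1)^{i+j}M_{ij}$. The adjugate (Cramer) formula gives $\hat{A}^{-1}=\det(\hat{A})^{-1}\operatorname{adj}(\hat{A})$, where the $(j,i)$ entry of the adjugate is the cofactor $C_{ij}$, with the indices transposed. Hence
\begin{equation*}
  \ls{\hat{A}^{-1}}_{j,i} = \frac{C_{ij}}{\det(\hat{A})}
  = \frac{(-1)^{i+j}\det(B)}{\det(\hat{A})}.
\end{equation*}
Taking absolute values and recalling the determinant/volume identification yields $\abs{\hat{A}^{-1}}_{j,i}=\abs{\det(B)}/\abs{\det(\hat{A})}=\vol(B)/\vol(\hat{A})$, which is the claimed formula.

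For the characterization of $\rho$-maximum volume, I would apply the formula just proved to every $(n-1)\times(n-1)$ submatrix $B'$ of $\hat{A}$. Each such $B'$ arises by deleting some row $i'$ and column $j'$, so $\vol(B')=\vol(\hat{A})\,\abs{\hat{A}^{-1}}_{j',i'}$. Maximizing over all choices of $i'$ and $j'$ gives $\max_{B'}\vol(B')=\vol(\hat{A})\,\norm{\hat{A}^{-1}}_C$, since $\norm{\cdot}_C$ is the maximum absolute entry. The defining inequality \eqref{eq:maxvolume} for global $\rho$-maximum volume, namely $\rho\vol(B)\ge\vol(B')$ for all $B'$, is therefore equivalent to $\rho\,\vol(\hat{A})\,\abs{\hat{A}^{-1}}_{j,i}\ge\vol(\hat{A})\,\norm{\hat{A}^{-1}}_C$; dividing through by $\vol(\hat{A})>0$ gives the stated condition $\rho\abs{\hat{A}^{-1}}_{j,i}\ge\norm{\hat{A}^{-1}}_C$.

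The only genuinely delicate point is the index bookkeeping: the adjugate is the transpose of the cofactor matrix, which is exactly why deleting row $i$ and column $j$ of $\hat{A}$ is governed by the $(j,i)$ entry of $\hat{A}^{-1}$ rather than the $(i,j)$ entry. The sign $(-1)^{i+j}$ is irrelevant here because the statement involves absolute values throughout.
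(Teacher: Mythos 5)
Your proof is correct and follows essentially the same route as the paper: the adjugate identity $\hat{A}^{-1}=\det(\hat{A})^{-1}\operatorname{adj}(\hat{A})$ you invoke is exactly the paper's combination of Cramer's rule with the Laplace expansion that identifies the numerator with $(-1)^{i+j}\det(B)$. Your explicit treatment of the ``in particular'' claim (noting that every $(n-1)\times(n-1)$ submatrix of $\hat{A}$ arises by deleting one row and one column, so the maximum of $\vol(B')$ is $\vol(\hat{A})\norm{\hat{A}^{-1}}_C$) is a welcome spelling-out of a step the paper leaves implicit.
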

\begin{proof}
  By Cramer's rule
  \begin{equation*}
    (\hat{A}^{-1})_{j,i} = (\hat{A}^{-1}\e_i)_j
    = \frac{\det(\hat{A}-\hat{A}\e_j\e_j^T+\e_i\e_j^T)}{\det(\hat{A})}.
  \end{equation*}
  Since the matrix whose determinant is taken in the numerator has unit column
  $\e_i$ in position $j$, by Laplace's formula
  \begin{equation*}
    \det(\hat{A}-\hat{A}\e_j\e_j^T+\e_i\e_j^T) = (-1)^{i+j} \det(B).
  \end{equation*}
  Substituting into the previous expression and taking absolute values completes
  the proof.
\end{proof}

\begin{lemma}
  \label{lem:rowcolchange}
  Let $A_{11}$ be $k\times k$ nonsingular and
  \begin{equation}
    \label{eq:rowcolchange:Ahat}
    \hat{A} = \bmat{A_{11} & \+b \\ \+c^T & \alpha}.
  \end{equation}
  Let $\gamma=\hat{A}/A_{11}$ and $A_{11}''$ be the leading $k\times k$ block of
  $\hat{A}$ after interchanging columns $k+1$ and $j$ ($1\le j\le k$) and rows
  $k+1$ and $i$ ($1\le i\le k$). Then
  \begin{equation}
    \label{eq:rowcolchange:ratio}
    \frac{\vol(A_{11}'')}{\vol(A_{11})} =
    \abs{\gamma(A_{11}^{-1})_{j,i} + (A_{11}^{-1}\+b)_j(A_{11}^{-T}\+c)_i}.
  \end{equation}
\end{lemma}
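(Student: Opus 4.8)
The plan is to reduce the simultaneous row-and-column interchange to a single row/column \emph{removal}, so that Lemma \ref{lem:rowcolremove} applies, and then to read off the relevant entry of $\hat{A}^{-1}$ from the block-inverse formula. First I would identify $A_{11}''$ combinatorially. After interchanging columns $k+1$ and $j$ and rows $k+1$ and $i$ in $\hat{A}$, the leading $k\times k$ block retains exactly the rows $\{1,\dots,k+1\}\setminus\{i\}$ and the columns $\{1,\dots,k+1\}\setminus\{j\}$ of $\hat{A}$; the interchanges merely reorder these (moving original row $k+1$ into position $i$ and original column $k+1$ into position $j$). Hence $A_{11}''$ agrees, up to a row permutation and a column permutation, with the matrix $M$ obtained from $\hat{A}$ by deleting row $i$ and column $j$. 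Since $\vol$ is the absolute value of the determinant, the signs of these permutations are irrelevant and $\vol(A_{11}'')=\vol(M)$.

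Assuming first that $\hat{A}$ is nonsingular (equivalently $\gamma\neq0$), I would apply Lemma \ref{lem:rowcolremove} to $\hat{A}$ with $B=M$, giving $\vol(M)=\vol(\hat{A})\,\abs{(\hat{A}^{-1})_{j,i}}$. The Schur determinant identity $\det\hat{A}=\det(A_{11})\,\gamma$ yields $\vol(\hat{A})=\vol(A_{11})\,\abs{\gamma}$. For the entry of the inverse, the block-inverse formula expresses the leading $k\times k$ block of $\hat{A}^{-1}$ as $A_{11}^{-1}+\gamma^{-1}A_{11}^{-1}\+b\,\+c^{T}A_{11}^{-1}$, so that for $1\le i,j\le k$ one has $(\hat{A}^{-1})_{j,i}=(A_{11}^{-1})_{j,i}+\gamma^{-1}(A_{11}^{-1}\+b)_j(A_{11}^{-T}\+c)_i$. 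Multiplying the three pieces together and absorbing $\abs{\gamma}$ into the bracket produces exactly \eqref{eq:rowcolchange:ratio}.

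It remains to remove the nonsingularity assumption, since the statement only requires $A_{11}$ (not $\hat{A}$) to be nonsingular. I would rewrite the target identity without absolute values as $\det(A_{11}'')=(-1)^{i+j}\det(A_{11})\big[\gamma(A_{11}^{-1})_{j,i}+(A_{11}^{-1}\+b)_j(A_{11}^{-T}\+c)_i\big]$, where the fixed sign $(-1)^{i+j}$ is the cost of undoing the two permutations of the first step. With $A_{11}$ held fixed, both sides are polynomials in the entries $\alpha,\+b,\+c$ of $\hat{A}$ (note $\gamma=\alpha-\+c^{T}A_{11}^{-1}\+b$ and $\det(A_{11})(A_{11}^{-1})_{j,i}$ is a cofactor). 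They agree on the dense set $\{\gamma\neq0\}$, hence identically; taking absolute values recovers \eqref{eq:rowcolchange:ratio} even when $\gamma=0$.

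The hard part is not the algebra but the bookkeeping in the first step—verifying that the double interchange is genuinely equivalent to deleting row $i$ and column $j$ and tracking the resulting sign—together with the degenerate case $\gamma=0$, where $\hat{A}^{-1}$ does not exist and Lemma \ref{lem:rowcolremove} cannot be invoked directly, so the continuity (polynomial-identity) argument is needed to close the gap.
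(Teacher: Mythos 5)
Your reduction of the double interchange to a row/column deletion and your treatment of the nonsingular case are correct and essentially the paper's own argument: the paper likewise applies Lemma~\ref{lem:rowcolremove} together with the block-inverse formula \eqref{eq:Ahatinv} (it reads $\vol(A_{11})=\abs{\gamma^{-1}}\vol(\hat{A})$ off Lemma~\ref{lem:rowcolremove} rather than invoking the Schur determinant identity, an immaterial difference). Where you genuinely diverge is the degenerate case $\gamma=0$: the paper handles it directly, performing first the column exchange and then the row exchange via Lemma~\ref{lem:colchange} and using the fact that singularity of $\hat{A}$ forces $(A_{11}')^{-T}\+c'=A_{11}^{-T}\+c$ (where $A_{11}'$ is $A_{11}$ with column $j$ replaced by $\+b$, and $\+c'$ is $\+c$ with entry $j$ replaced by $\alpha$), plus a separate subcase when $(A_{11}^{-1}\+b)_j=0$. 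You instead propose a polynomial-density argument. That idea is legitimate and can be made to work, but your execution has two concrete flaws.

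First, the signed identity you propose to continue analytically is false: the correct statement is $\det(A_{11}'')=\det(A_{11})\bigl[\gamma(A_{11}^{-1})_{j,i}+(A_{11}^{-1}\+b)_j(A_{11}^{-T}\+c)_i\bigr]$ with \emph{no} factor $(-1)^{i+j}$. You accounted for the permutation sign $(-1)^{i+j}$ incurred in identifying $A_{11}''$ with the deleted matrix $M$, but overlooked that Cramer's rule contributes a second factor $(-1)^{i+j}$ (it is visible in the paper's proof of Lemma~\ref{lem:rowcolremove}, where Laplace's formula produces it before the absolute value swallows it), and the two cancel. Concretely, take $k=2$, $A_{11}=I_2$, $j=1$, $i=2$: then $A_{11}''=\bmat{b_1&0\\ \alpha&c_2}$, so $\det(A_{11}'')=b_1c_2$, while your right-hand side gives $-b_1c_2$. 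Second, and independently of the sign, the claim that the two polynomials ``agree on the dense set $\{\gamma\neq0\}$'' does not follow from what you established there: your nonsingular case proves only equality of \emph{absolute values}, and $\abs{P}=\abs{Q}$ pointwise does not by itself give $P=\pm Q$ with a single fixed sign on that set. Both problems disappear if you compare squares instead: $\det(A_{11}'')^2$ and $\det(A_{11})^2\bigl[\gamma(A_{11}^{-1})_{j,i}+(A_{11}^{-1}\+b)_j(A_{11}^{-T}\+c)_i\bigr]^2$ are polynomials in $(\alpha,\+b,\+c)$ that agree on $\{\gamma\neq0\}$, hence everywhere, and taking square roots yields \eqref{eq:rowcolchange:ratio} including the case $\gamma=0$. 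With that repair your proof is a valid alternative to the paper's, and it has the modest advantage of avoiding the paper's case analysis in the singular situation.
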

\begin{proof}
  Firstly consider that $\hat{A}$ is singular, in which case $\rank(\hat{A})=k$
  and $\gamma=0$. If $\abs{A_{11}^{-1}\+b}_j=0$, then the first $k$ columns of
  $\hat{A}$ after the interchanges have rank $k-1$. Hence $A_{11}''$ must be
  singular and both sides of \eqref{eq:rowcolchange:ratio} are zero. Otherwise
  let $A_{11}'$ be obtained from $A_{11}$ by replacing column $j$ by the vector
  $\+b$. Then, by Lemma~\ref{lem:colchange},
  \begin{equation*}
    \vol(A_{11}') = \vol(A_{11}) \abs{A_{11}^{-1}\+b}_j.
  \end{equation*}
  Let $\+c'$ be obtained from $\+c$ be replacing the $j$-th entry by $\alpha$.
  Because $\hat{A}$ is singular,
  \begin{equation*}
    (A_{11}')^{-T}\+c' = A_{11}^{-T}\+c.
  \end{equation*}
  Therefore, by Lemma~\ref{lem:colchange},
  \begin{align*}
    \vol(A_{11}'') &= \vol(A_{11}') \abs{(A_{11}')^{-T}\+c'}_i \\
    &= \vol(A_{11}) \abs{A_{11}^{-1}\+b}_j \abs{A_{11}^{-T}\+c}_i.
  \end{align*}

  Secondly consider that $\hat{A}$ is nonsingular, in which case $\gamma\neq0$.
  Then
  \begin{align}
    \label{eq:Ahatinv}
    \hat{A}^{-1} &= \bmat{H& \+f\\ \+g^T& \gamma^{-1}}, \\
    \intertext{where}
    \+f &= -\gamma^{-1}A_{11}^{-1}\+b, \notag \\
    \+g &= -\gamma^{-1}A_{11}^{-T}\+c, \notag \\
    H &= A_{11}^{-1} + \gamma\+f\+g^T. \notag
  \end{align}
  It follows from Lemma~\ref{lem:rowcolremove} that
  \begin{align*}
    \vol(A_{11}) &= \abs{\gamma^{-1}} \vol(\hat{A}), \\
    \vol(A_{11}'') &= \abs{H_{j,i}} \vol(\hat{A}).
  \end{align*}
  Therefore
  \begin{align*}
    \frac{\vol(A_{11}'')}{\vol(A_{11})}
    &= \frac{\abs{(A_{11}^{-1})_{j,i} +
        \gamma^{-1}(A_{11}^{-1}\+b)_j(A_{11}^{-T}\+c)_i}} {\abs{\gamma^{-1}}} \\
    &= \abs{\gamma(A_{11}^{-1})_{j,i} + (A_{11}^{-1}\+b)_j(A_{11}^{-T}\+c)_i}.
  \end{align*}
\end{proof}

\section{Rank Revealing Algorithm}
\label{sec:algorithm}

This section presents the algorithm for selecting the submatrix $A_{11}$ whose
dimension reveals the numerical rank of $A$. Instead of selecting the row and
column subsets directly, the algorithm selects a basis matrix of
$\AI=\bmat{A&\beta I_m}$. The columns of $A$ and $\beta I_m$ in $\AI$ are termed
\emph{structural} and \emph{logical}, respectively. Assume that $\B$, $\N$ is a
basic-nonbasic partitioning of the columns of $\AI$ and
\begin{equation}
  \label{eq:BN}
  \AI_\B = \bmat{A_{11}&0\\ A_{21}&\beta I_{m-k}}, \quad
  \AI_\N = \bmat{A_{12}&\beta I_k\\ A_{22}&0},
\end{equation}
where the rightmost $m-k$ and $k$ columns of $\AI_\B$ and $\AI_\N$ are logical
(the indices in $\B$ and $\N$ can always be permuted to obtain that form). The
partitioning uniquely determines $A_{11}$. Therefore any basis for $\AI$
determines a square nonsingular $A_{11}$.

To obtain $A_{11}$ with the desired properties, it will turn out that $\AI_\B$
must have local $\rho$-maximum volume in $\AI$. An algorithm for finding a basis
matrix of local maximum volume is given in \cite{goreinov2010}.
Algorithm~\ref{alg:maxvolume} is a generic version that leaves some flexibility
to the implementation by not specifying how to choose $(p,q)$ in
line~\ref{line:choosepq} in case there is more than one candidate. In
particular, it is not necessary to scan the entire matrix $\AI_\B^{-1}\AI_\N$ in
every iteration or even to compute it explicitly.

\begin{algorithm}
  \caption{\texttt{find\_submatrix}}
  \label{alg:maxvolume}
  \begin{algorithmic}[1]
    \Require $A\in\RR^{m\times n}$, $\rho\ge1$, $\beta>0$
    \State Build $\AI=\bmat{A&\beta I_m}$
    \State Initialize $\B=\{n+1,\ldots,n+m\}$, $\N=\{1,\ldots,n\}$
    \Loop
    \State Let $M=\AI_\B^{-1}\AI_\N$
    \If{$\norm{M}_C\le\rho$}
    \State Stop
    \EndIf
    \State Choose $(p,q)$ such that $\abs{M}_{p,q}>\rho$
    \label{line:choosepq}
    \State $\B_p\gets\N_q$
    \EndLoop
    \State Build $A_{11}$ from \eqref{eq:BN}
    \State Let $r$ be the dimension of $A_{11}$
  \end{algorithmic}
\end{algorithm}

\begin{lemma}
  \label{lem:algorithm}
  Algorithm~\ref{alg:maxvolume} terminates in a finite number of iterations.
  The resulting $A_{11}$ has local $(2\rho^2)$-maximum volume in $A$ and
  \begin{equation}
    \label{eq:betabound}
    \norm{A/A_{11}}_C\le\rho\beta, \quad
    \norm{A_{11}^{-1}}_C\le\rho\beta^{-1}.
  \end{equation}
\end{lemma}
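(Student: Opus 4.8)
The plan is to treat the three assertions separately, in each case reducing to the volume-change formulas of Section~\ref{sec:maxvolume} applied to the basis $\AI_\B$ held at termination.

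First I would settle termination. By Lemma~\ref{lem:colchange}, the swap $\B_p\gets\N_q$ in line~\ref{line:choosepq} multiplies $\vol(\AI_\B)$ by the factor $\abs{\AI_\B^{-1}(\AI_\N)_q}_p=\abs{M}_{p,q}>\rho\ge1$. Thus every iteration strictly increases $\vol(\AI_\B)$, and since this factor is nonzero the new basis matrix remains nonsingular. As each basis of $\AI$ has a fixed volume and the volume only increases, no basis is visited twice; with finitely many bases the loop must halt. At termination $\norm{M}_C\le\rho$, and reading this through Lemma~\ref{lem:colchange} once more shows that $\AI_\B$ has local $\rho$-maximum volume in $\AI$.

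The bounds \eqref{eq:betabound} I would obtain by making $M$ explicit. Since $\AI_\B$ in \eqref{eq:BN} is block lower triangular, its inverse is
\begin{equation*}
  \AI_\B^{-1}=\bmat{A_{11}^{-1}&0\\ -\beta^{-1}A_{21}A_{11}^{-1}&\beta^{-1}I_{m-k}},
\end{equation*}
and multiplying by $\AI_\N$ gives
\begin{equation*}
  M=\AI_\B^{-1}\AI_\N=
  \bmat{A_{11}^{-1}A_{12}&\beta A_{11}^{-1}\\ \beta^{-1}(A/A_{11})&-A_{21}A_{11}^{-1}}.
\end{equation*}
The condition $\norm{M}_C\le\rho$ therefore bounds each block: the $(1,2)$ block gives $\norm{A_{11}^{-1}}_C\le\rho\beta^{-1}$ and the $(2,1)$ block gives $\norm{A/A_{11}}_C\le\rho\beta$, which are exactly \eqref{eq:betabound}, while the other two blocks record the auxiliary bounds $\norm{A_{11}^{-1}A_{12}}_C\le\rho$ and $\norm{A_{21}A_{11}^{-1}}_C\le\rho$ needed below.

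Finally, for the local $(2\rho^2)$-maximum volume I would verify Definition~\ref{def:maxvolume}(iii) by checking the three ways a $k\times k$ submatrix of a $(k+1)\times(k+1)$ superset of $A_{11}$ can differ from $A_{11}$. A pure column exchange has volume ratio $\abs{A_{11}^{-1}A_{12}}_{j,\ell}\le\rho$ by Lemma~\ref{lem:colchange}, and a pure row exchange is bounded by $\rho$ symmetrically. The main obstacle is the simultaneous row-and-column exchange, for which I would invoke Lemma~\ref{lem:rowcolchange} with $\hat{A}=\bmat{A_{11}&\+b\\ \+c^T&\alpha}$, where $\+b$, $\+c^T$, $\alpha$ are the added column, row and corner entry. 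The four scalars in \eqref{eq:rowcolchange:ratio} are then entries of the four blocks of $M$: $\gamma$ is an entry of $A/A_{11}$, so $\abs{\gamma}\le\rho\beta$; $(A_{11}^{-1})_{j,i}$ is bounded by $\rho\beta^{-1}$; and the factors $(A_{11}^{-1}\+b)_j$ and $(A_{11}^{-T}\+c)_i$ are entries of $A_{11}^{-1}A_{12}$ and $A_{21}A_{11}^{-1}$, each bounded by $\rho$. The triangle inequality then yields
\begin{equation*}
  \frac{\vol(A_{11}'')}{\vol(A_{11})}\le(\rho\beta)(\rho\beta^{-1})+\rho\cdot\rho=2\rho^2,
\end{equation*}
the $\beta$ and $\beta^{-1}$ cancelling in the first product. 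Since $\rho\le2\rho^2$ for $\rho\ge1$, all three exchange types respect the factor $2\rho^2$, which is the claim.
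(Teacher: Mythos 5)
Your proof is correct and follows essentially the same route as the paper's: termination via strictly increasing basis volume, the block identity for $\AI_\B^{-1}\AI_\N$ in \eqref{eq:tbl} to read off \eqref{eq:betabound}, and Lemma~\ref{lem:rowcolchange} with the triangle inequality to get the $2\rho^2$ bound. You merely make explicit two things the paper leaves implicit --- the derivation of \eqref{eq:tbl} and the enumeration of the pure-column, pure-row, and mixed exchange cases --- which is fine.
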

\begin{proof}
  Each basis update in Algorithm~\ref{alg:maxvolume} increases the volume of
  $\AI_\B$ by a factor greater than $1$. Therefore a basis cannot repeat and the
  algorithm terminates in a finite number of iterations. When the algorithm
  terminates, all entries of
  \begin{equation}
    \label{eq:tbl}
    \AI_\B^{-1}\AI_\N = \bmat{A_{11}^{-1}A_{12}& \beta
      A_{11}^{-1}\\ \beta^{-1}A/A_{11}& -A_{21}A_{11}^{-1}}
  \end{equation}
  are bounded by $\rho$ in absolute value. This means that $A_{11}$ has local
  $\rho$-maximum volume in its block row and block column, and
  $\norm{A/A_{11}}_C\le\rho\beta$ and $\norm{A_{11}^{-1}}_C\le\rho\beta^{-1}$.
  If $r=m$ (i.\,e.\ $\B$ contains only structural columns), then $A_{11}$ has
  local $\rho$-maximum volume in $A$. Otherwise consider any submatrix of $A$ of
  the form \eqref{eq:rowcolchange:Ahat}. The right-hand side in
  \eqref{eq:rowcolchange:ratio} is bounded by
  \begin{equation*}
    \abs{\gamma(A_{11}^{-1})_{j,i}+(A_{11}^{-1}\+b)_j(A_{11}^{-T}\+c)_i}
    \le \rho\beta\rho\beta^{-1} + \rho\rho = 2\rho^2.
  \end{equation*}
  Therefore $A_{11}$ has local $(2\rho^2)$-maximum volume in $A$.
\end{proof}

The numerical rank of $A$ is determined by Algorithm~\ref{alg:maxvolume} as the
dimension of $A_{11}$. It follows from the interlacing property of the singular
values \cite[Corollary~8.6.3]{golub1996} that for any $k\times k$ submatrix $B$
of $A$,
\begin{equation*}
  \norm{B^{-1}}_C \ge \frac{1}{k}\norm{B^{-1}}_2 = \frac{1}{k\sigma_{\min}(B)}
  \ge \frac{1}{k\sigma_k(A)}.
\end{equation*}
If we choose $\beta\ge\max(m,n)\eps\rho$ in Algorithm~\ref{alg:maxvolume},
then it is guaranteed that
\begin{equation*}
  \frac{1}{r\sigma_r(A)} \le \norm{A_{11}^{-1}}_C \le \frac{1}{\max(m,n)\eps}
\end{equation*}
and therefore $\sigma_r(A)\ge\eps$ as desired. (Using $\min(m,n)$ instead of
$\max(m,n)$ would be sufficient.) On the other hand, from
\cite[Theorem~2.7]{pan2000},
\begin{equation*}
  \norm{A/B}_2 \ge \sigma_{k+1}(A)
\end{equation*}
for any $k\times k$ submatrix $B$ of $A$. Therefore
\begin{equation*}
  \sigma_{k+1}(A) \le \norm{A/A_{11}}_C\sqrt{(m-r)(n-r)} \le \beta \rho
  \sqrt{(m-r)(n-r)}.
\end{equation*}
In contrast to the singular value decomposition, Algorithm~\ref{alg:maxvolume}
cannot determine $r$ such that $\sigma_r(A)\ge\eps>\sigma_{r+1}(A)$. It can only
guarantee the first inequality and a bound on $\sigma_{r+1}(A)$ in terms of
$\eps$ and the dimension of $A$. In our definition this is sufficient for a rank
revealing factorization. In practice, a reasonable choice for $\beta$ might be
\begin{equation}
  \label{eq:tol}
  \beta = \max(m,n) \eps_{\text{mach}} \norm{A}_C,
\end{equation}
where $\eps_{\text{mach}}$ is the relative machine precision.

\section{Bounds on $\sigma_{\min}(A_{11})$ and $\norm{A/A_{11}}_2$}
\label{sec:rankrev}

The discussion so far has shown that $A_{11}$ that satisfies
\eqref{eq:betabound} reveals the numerical rank of $A$. It remains to be shown
that the minimum singular value of $A_{11}$ is close to $\sigma_r(A)$ for
$A_{11}$ obtained from Algorithm~\ref{alg:maxvolume}. This section derives
bounds on $\sigma_{min}(A_{11})$ and $\norm{A/A_{11}}_2$ in terms of the
singular values of $A$ that hold for any local maximum volume submatrix. More
specifically, the following theorem is proved.

\begin{theorem}
  \label{thm:rankrev}
  Let $A_{11}$ be $k\times k$ nonsingular and have local $(2\rho^2)$-maximum
  volume in $A$. Then
  \begin{align}
    \label{eq:maxvolume:lowerbound}
    &\sigma_k(A) \ge \sigma_{\min}(A_{11}) \ge 
    \frac{1}{2\rho^2 k\sqrt{(m-k+1)(n-k+1)}} \sigma_k(A), \\
    \label{eq:maxvolume:upperbound}
    &\sigma_{k+1}(A) \le \norm{A/A_{11}}_2 \le
    2\rho^2(k+1)\sqrt{(m-k)(n-k)}\sigma_{k+1}(A).
  \end{align}
\end{theorem}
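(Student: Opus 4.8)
The plan is to dispatch the two inner inequalities by singular value interlacing and to obtain each of the two outer inequalities by writing a single entry of a Schur complement as a ratio of volumes, then controlling that ratio with the local maximum volume hypothesis and the norm equivalence $\norm{M}_2\le\sqrt{pq}\,\norm{M}_C$ for a $p\times q$ matrix. Write $\tau:=2\rho^2$ and let $R,C$ denote the index sets of the rows and columns of $A_{11}$. By the characterisation of local maximum volume noted after Definition~\ref{def:maxvolume}, the hypothesis says exactly that replacing one row and/or one column of $A_{11}$ by a row/column of $A$ multiplies $\vol(A_{11})$ by at most $\tau$. The inner inequalities are immediate: $\sigma_{\min}(A_{11})=\sigma_k(A_{11})\le\sigma_k(A)$ by interlacing \cite[Corollary~8.6.3]{golub1996}, and $\sigma_{k+1}(A)\le\norm{A/A_{11}}_2$ is \cite[Theorem~2.7]{pan2000} with $\dim A_{11}=k$.

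For the upper bound \eqref{eq:maxvolume:upperbound} I would argue entrywise. For $i\notin R$, $j\notin C$ the $(i,j)$ entry of $A/A_{11}$ is the scalar Schur complement $\hat A/A_{11}$, where $\hat A$ is the $(k+1)\times(k+1)$ submatrix of $A$ on rows $R\cup\{i\}$ and columns $C\cup\{j\}$; since $\det\hat A=(\hat A/A_{11})\det A_{11}$, this gives $\abs{(A/A_{11})_{i,j}}=\vol(\hat A)/\vol(A_{11})$ (if $\det\hat A=0$ the entry is zero and there is nothing to prove, so assume $\hat A$ nonsingular). To bound $\vol(\hat A)$ I apply Lemma~\ref{lem:rowcolremove} to $\hat A$: the largest volume of a $k\times k$ submatrix of $\hat A$ equals $\norm{\hat A^{-1}}_C\,\vol(\hat A)$. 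Every such submatrix is $A_{11}$ with at most one row and one column interchanged, so local maximum volume bounds it by $\tau\vol(A_{11})$; and $\norm{\hat A^{-1}}_C\ge\tfrac1{k+1}\norm{\hat A^{-1}}_2=\tfrac1{(k+1)\sigma_{k+1}(\hat A)}$. Hence $\vol(\hat A)\le\tau(k+1)\sigma_{k+1}(\hat A)\,\vol(A_{11})$, and with $\sigma_{k+1}(\hat A)\le\sigma_{k+1}(A)$ (interlacing) I get $\norm{A/A_{11}}_C\le\tau(k+1)\sigma_{k+1}(A)$. Converting to $\norm{\cdot}_2$ on the $(m-k)\times(n-k)$ matrix $A/A_{11}$ via the factor $\sqrt{(m-k)(n-k)}$ yields \eqref{eq:maxvolume:upperbound}.

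For the lower bound \eqref{eq:maxvolume:lowerbound} I would run the dual argument, removing a row and column instead of adding one. Since $\sigma_{\min}(A_{11})=1/\norm{A_{11}^{-1}}_2\ge1/(k\norm{A_{11}^{-1}}_C)$, it suffices to bound $\norm{A_{11}^{-1}}_C$ from above. Let the largest absolute entry of $A_{11}^{-1}$ sit in position $(j^*,i^*)$; by Lemma~\ref{lem:rowcolremove}, $\norm{A_{11}^{-1}}_C=\vol(B)/\vol(A_{11})$, where $B$ is obtained from $A_{11}$ by deleting row $i^*$ and column $j^*$ (and $B$ is nonsingular because $\norm{A_{11}^{-1}}_C>0$). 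Now pass to the Schur complement $A/B$, of size $(m-k+1)\times(n-k+1)$: each of its entries is $\pm\vol(B')/\vol(B)$, with $B'$ the $k\times k$ submatrix of $A$ containing $B$ determined by the entry, and the $(i^*,j^*)$ entry corresponds to $B'=A_{11}$, so $\abs{(A/B)_{i^*,j^*}}=\vol(A_{11})/\vol(B)=1/\norm{A_{11}^{-1}}_C$. The key point is that every $k\times k$ submatrix $B'\supseteq B$ is exactly $A_{11}$ with row $i^*$ and/or column $j^*$ replaced by another row/column of $A$, so local maximum volume gives $\vol(B')\le\tau\vol(A_{11})$; thus the $(i^*,j^*)$ entry is a $\tau$-largest entry of $A/B$, i.e. $\norm{A/B}_C\le\tau/\norm{A_{11}^{-1}}_C$. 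On the other hand $\norm{A/B}_C\ge\tfrac1{\sqrt{(m-k+1)(n-k+1)}}\norm{A/B}_2\ge\tfrac1{\sqrt{(m-k+1)(n-k+1)}}\sigma_k(A)$, the last inequality being \cite[Theorem~2.7]{pan2000} with $\dim B=k-1$. Combining the two estimates yields $\norm{A_{11}^{-1}}_C\le\tau\sqrt{(m-k+1)(n-k+1)}/\sigma_k(A)$, and hence \eqref{eq:maxvolume:lowerbound}.

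The step I expect to be the main obstacle is the ``key point'' in the lower bound: checking that the entries of $A/B$ are in bijection with the single and double interchanges of $A_{11}$, and that each case --- the recovery $B'=A_{11}$, a replacement of only column $j^*$, a replacement of only row $i^*$, and a genuine double replacement --- is covered by the local maximum volume hypothesis (single interchanges are bounded even by $\rho\le\tau$, double interchanges by $\tau=2\rho^2$). The rest is bookkeeping: tracking the dimensions $(m-k)\times(n-k)$ and $(m-k+1)\times(n-k+1)$ that enter the two norm conversions, and treating the boundary cases $k=m$ or $k=n$, where one Schur complement has no rows or columns and the corresponding bound holds trivially.
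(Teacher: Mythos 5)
Your proof is correct. Three of the four inequalities are obtained exactly as in the paper: the inner bounds come from interlacing and from \cite[Theorem~2.7]{pan2000}, as in Corollaries~\ref{cor:lowerbound} and~\ref{cor:upperbound}, and your entrywise bound on $\norm{A/A_{11}}_C$ via Lemma~\ref{lem:rowcolremove} applied to a $(k+1)\times(k+1)$ submatrix $\hat A$ is the proof of Lemma~\ref{lem:upperbound} (from \cite{goreinov2001}) in slightly different clothing --- your inequality $\norm{\hat A^{-1}}_C\vol(\hat A)\le\tau\vol(A_{11})$ is the paper's $\rho\abs{\gamma^{-1}}\ge\norm{\hat A^{-1}}_C$. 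Where you genuinely depart from the paper is the lower bound \eqref{eq:maxvolume:lowerbound}. The paper (Lemma~\ref{lem:lowerbound}) takes $B$ to be a $(k-1)\times(k-1)$ submatrix of \emph{maximum volume in $A_{11}$}, bounds $\norm{A/B}_C\le\rho\abs{A_{11}/B}$ by the same ``differs by at most one row and one column'' observation you make, and then needs a second, recursive application of Lemma~\ref{lem:upperbound} (with $\rho=1$, inside $A_{11}$) to convert $\abs{A_{11}/B}$ into $k\,\sigma_k(A_{11})$; it also treats $k=1$ as a separate case. You instead pick $B$ by deleting the row and column of the largest entry of $A_{11}^{-1}$, so that Lemma~\ref{lem:rowcolremove} gives the exact identity $\abs{A_{11}/B}=\vol(A_{11})/\vol(B)=1/\norm{A_{11}^{-1}}_C$, and you finish with the elementary inequality $\norm{A_{11}^{-1}}_2\le k\norm{A_{11}^{-1}}_C$ instead of invoking the upper-bound lemma again. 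Both routes rest on the same two pillars --- the local maximum volume property applied to all $k\times k$ submatrices containing $B$, and $\norm{A/B}_2\ge\sigma_k(A)$ from \cite[Theorem~2.7]{pan2000} --- and produce the same constant; yours is a little more economical and has the attraction of bounding $\norm{A_{11}^{-1}}_C$ directly, which is the quantity Algorithm~\ref{alg:maxvolume} actually controls.

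Two small points to tidy up. First, your parenthetical that single interchanges are bounded ``even by $\rho$'' is not available under the hypothesis of the theorem, which supplies only the factor $\tau=2\rho^2$ for every exchange; the $\rho$ bound on single exchanges comes from the construction in Lemma~\ref{lem:algorithm}, not from the local maximum volume assumption. This is harmless, since your argument only uses the $\tau$ bound. Second, the degenerate case needing comment is not $k=m$ or $k=n$ (these are excluded because Definition~\ref{def:maxvolume}(iii) presupposes $k<\min(m,n)$) but $k=1$, where your $B$ is the empty matrix; the argument does go through with the conventions $\vol(\emptyset)=1$ and $A/\emptyset=A$, but this should be stated --- it is presumably why the paper isolates $k=1$ as a special case.
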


The first inequalities in \eqref{eq:maxvolume:lowerbound} and
\eqref{eq:maxvolume:upperbound} hold true for any $k\times k$ submatrix of $A$,
whereas the second inequalities require the maximum volume property.
\eqref{eq:maxvolume:upperbound} is proved in \cite{goreinov2001} under the
assumption that $A_{11}$ has global $\rho$-maximum volume in $A$. Interestingly,
the proof given there goes through unchanged if $A_{11}$ has local
$\rho$-maximum volume as defined in this paper. The proof is given for
completeness. The proof for \eqref{eq:maxvolume:lowerbound} is new to the
authors.

\begin{lemma}
  \label{lem:upperbound}
  Let $A\in\RR^{m\times n}$ and $A_{11}$ be a nonsingular $k\times k$ submatrix
  ($k<\min(m,n)$) of local $\rho$-maximum volume. Then
  \begin{equation*}
    \norm{A/A_{11}}_C \le \rho(k+1)\sigma_{k+1}(A).
  \end{equation*}
\end{lemma}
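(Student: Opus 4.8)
The plan is to bound each entry of $A/A_{11}$ separately and then take the maximum over all positions. Fix a position $(i,j)$ of the Schur complement. I would form the $(k+1)\times(k+1)$ submatrix $\hat{A}$ of $A$ consisting of $A_{11}$ together with the row and column of $A$ that produce the $(i,j)$ entry of $A_{22}$, arranged as in \eqref{eq:rowcolchange:Ahat}:
\[
  \hat{A} = \bmat{A_{11} & \+b \\ \+c^T & \alpha}.
\]
The key observation is that the scalar Schur complement $\gamma := \hat{A}/A_{11} = \alpha - \+c^T A_{11}^{-1}\+b$ is precisely the $(i,j)$ entry of $A/A_{11}$. Since both matrices are square, $\vol(\hat{A}) = \abs{\gamma}\vol(A_{11})$, so controlling $\abs{\gamma}$ amounts to controlling the volume ratio $\vol(\hat{A})/\vol(A_{11})$.

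Next I would invoke the local maximum volume hypothesis in the form of Definition~\ref{def:maxvolume}(iii): $A_{11}$ has global $\rho$-maximum volume among all $k\times k$ submatrices of $\hat{A}$. Applying Lemma~\ref{lem:rowcolremove} to $\hat{A}$, every $k\times k$ submatrix arising by deleting a row and a column has volume $\abs{\hat{A}^{-1}}_{j',i'}\vol(\hat{A})$, while $A_{11}$ itself corresponds to the $(k+1,k+1)$ entry, giving $\vol(A_{11}) = \abs{\gamma^{-1}}\vol(\hat{A})$. The maximum volume inequality $\rho\vol(A_{11})\ge\vol(B')$ over all such $B'$ therefore translates into $\norm{\hat{A}^{-1}}_C\le\rho/\abs{\gamma}$.

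It then remains to pass from the entrywise bound on $\hat{A}^{-1}$ to a singular value bound. Using the norm relation $\norm{M}_C\ge\norm{M}_2/(k+1)$ for a $(k+1)\times(k+1)$ matrix together with $\norm{\hat{A}^{-1}}_2 = 1/\sigma_{k+1}(\hat{A})$, I obtain $\norm{\hat{A}^{-1}}_C\ge 1/\bigl((k+1)\sigma_{k+1}(\hat{A})\bigr)$. Combining this with the previous step yields $\abs{\gamma}\le\rho(k+1)\sigma_{k+1}(\hat{A})$, and the singular value interlacing property (since $\hat{A}$ is a submatrix of $A$) gives $\sigma_{k+1}(\hat{A})\le\sigma_{k+1}(A)$. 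As $(i,j)$ was arbitrary, taking the maximum over all entries produces the claimed bound $\norm{A/A_{11}}_C\le\rho(k+1)\sigma_{k+1}(A)$.

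I expect the main obstacle to be the middle step: recognizing that Definition~\ref{def:maxvolume}(iii) is exactly calibrated to the $(k+1)\times(k+1)$ submatrices $\hat{A}$ that encode single Schur complement entries, and that Lemma~\ref{lem:rowcolremove} converts the maximum volume condition into the bound $\norm{\hat{A}^{-1}}_C\le\rho/\abs{\gamma}$. Once this translation is in place, the remaining passage to singular values is routine, relying only on the standard norm equivalence and interlacing cited earlier in the paper.
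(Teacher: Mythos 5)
Your proposal is correct and follows essentially the same argument as the paper: form the bordered $(k+1)\times(k+1)$ matrix $\hat{A}$, identify $\gamma=\hat{A}/A_{11}$ with an entry of $A/A_{11}$, use Lemma~\ref{lem:rowcolremove} to convert the local maximum volume hypothesis into $\norm{\hat{A}^{-1}}_C\le\rho/\abs{\gamma}$, and finish with the norm equivalence $\norm{\cdot}_C\ge\norm{\cdot}_2/(k+1)$ and singular value interlacing. The only detail you skip is the degenerate case where $\hat{A}$ is singular, which your argument cannot cover since it writes $\hat{A}^{-1}$ and divides by $\abs{\gamma}$; the paper dispatches this case in one line, noting that then $\gamma=0$ and the bound holds trivially.
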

\begin{proof}[Proof (from \cite{goreinov2001}).]
  Consider any $(k+1)\times(k+1)$ submatrix of $A$ of the form
  \begin{equation*}
    \hat{A} = \bmat{A_{11}& \+b\\ \+c^T& \alpha}.
  \end{equation*}
  Then $\gamma=\alpha-\+c^TA_{11}^{-1}\+b$ is an entry of $A/A_{11}$ and each
  entry of $A/A_{11}$ has this form for a particular $\hat{A}$. Therefore it
  suffices to show that $\abs{\gamma}\le\rho(k+1)\sigma_{k+1}(A)$.

  If $\hat{A}$ is singular, then $\gamma=0$ and the claim is trivial. Otherwise,
  because $A_{11}$ has $\rho$-maximum volume in $\hat{A}$, by
  Lemma~\ref{lem:rowcolremove} and \eqref{eq:Ahatinv},
  \begin{equation*}
    \rho \abs{\gamma^{-1}} \ge \norm{\hat{A}^{-1}}_C.
  \end{equation*}
  It follows that
  \begin{equation*}
    \abs{\gamma}
    \le \rho \frac{1}{\norm{\hat{A}^{-1}}_C}
    \le \rho \frac{k+1}{\norm{\hat{A}^{-1}}_2}
    = \rho(k+1)\sigma_{k+1}(\hat{A})
    \le \rho(k+1)\sigma_{k+1}(A),
  \end{equation*}
  where the last inequality comes from the interlacing property of singular
  values \cite[Corollary~8.6.3]{golub1996}.
\end{proof}

\begin{corollary}
  \label{cor:upperbound}
  Let $A\in\RR^{m\times n}$ and $A_{11}$ be a nonsingular $k\times k$ submatrix
  ($k<\min(m,n)$) of local $\rho$-maximum volume. Then
  \begin{equation*}
    \sigma_{k+1}(A) \le \norm{A/A_{11}}_2 \le
    \rho(k+1)\sqrt{(m-k)(n-k)}\sigma_{k+1}(A).
  \end{equation*}
\end{corollary}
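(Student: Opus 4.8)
The plan is to obtain the two inequalities separately, each from a result already in hand, without any new machinery. For the lower bound $\sigma_{k+1}(A)\le\norm{A/A_{11}}_2$, I would simply invoke the estimate $\norm{A/B}_2\ge\sigma_{k+1}(A)$, valid for \emph{any} $k\times k$ submatrix $B$ of $A$; this is \cite[Theorem~2.7]{pan2000} as quoted in Section~\ref{sec:algorithm}, and it requires no maximum volume hypothesis, so it applies verbatim with $B=A_{11}$.

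For the upper bound, the idea is to promote the entrywise estimate of Lemma~\ref{lem:upperbound} to a spectral-norm estimate using the standard norm equivalence $\norm{M}_2\le\sqrt{pq}\,\norm{M}_C$ for a $p\times q$ matrix $M$ (the right-hand inequality recorded in the introduction). First I would observe that, since $A_{11}$ is $k\times k$, the Schur complement $A/A_{11}=A_{22}-A_{21}A_{11}^{-1}A_{12}$ has dimensions $(m-k)\times(n-k)$. Applying the norm equivalence with $p=m-k$ and $q=n-k$ then gives $\norm{A/A_{11}}_2\le\sqrt{(m-k)(n-k)}\,\norm{A/A_{11}}_C$. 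Combining this with Lemma~\ref{lem:upperbound}, which under the local $\rho$-maximum volume assumption yields $\norm{A/A_{11}}_C\le\rho(k+1)\sigma_{k+1}(A)$, produces the stated bound $\norm{A/A_{11}}_2\le\rho(k+1)\sqrt{(m-k)(n-k)}\sigma_{k+1}(A)$.

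The two halves are independent, and I do not expect a genuine obstacle: the lower bound is a direct citation, and the upper bound is a routine composition of the entrywise estimate with a dimension count. The only point demanding a little care is getting the size of the Schur complement correct, so that the sharp factor $\sqrt{(m-k)(n-k)}$ appears in place of a coarser $\sqrt{mn}$; this dimension bookkeeping is what keeps the dependence on $k$ tight.
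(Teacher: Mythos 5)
Your proposal is correct and matches the paper's own proof: the first inequality is cited from \cite[Theorem~2.7]{pan2000} and the second follows from Lemma~\ref{lem:upperbound}, with your explicit use of $\norm{M}_2\le\sqrt{(m-k)(n-k)}\,\norm{M}_C$ on the $(m-k)\times(n-k)$ Schur complement simply spelling out the step the paper leaves implicit.
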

\begin{proof}
  The first inequality is proved in \cite[Theorem~2.7]{pan2000}. The
  second inequality follows from Lemma~\ref{lem:upperbound}.
\end{proof}

\begin{lemma}
  \label{lem:lowerbound}
  Let $A\in\RR^{m\times n}$ and $A_{11}$ be a nonsingular $k\times k$ submatrix
  of local $\rho$-maximum volume. Then
  \begin{equation*}
    \sigma_k(A) \le \rho k \sqrt{(m-k+1)(n-k+1)} \sigma_k(A_{11}).
  \end{equation*}
\end{lemma}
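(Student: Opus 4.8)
The plan is to bound $\sigma_k(A)$ from above by relating $A_{11}^{-1}$ to the Schur complement of a $(k-1)\times(k-1)$ submatrix. Since $\sigma_k(A_{11})=\sigma_{\min}(A_{11})=1/\norm{A_{11}^{-1}}_2$ and $\norm{A_{11}^{-1}}_2\le k\norm{A_{11}^{-1}}_C$, it suffices to prove the scale-free estimate $\sigma_k(A)\,\norm{A_{11}^{-1}}_C\le\rho\sqrt{(m-k+1)(n-k+1)}$; combining it with $1/\norm{A_{11}^{-1}}_C\le k\,\sigma_k(A_{11})$ then gives the claim. This reduction isolates the combinatorial heart of the statement and accounts for the two factors $k$ and $\sqrt{(m-k+1)(n-k+1)}$ separately.

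First I would choose indices $i^*,j^*$ with $\abs{(A_{11}^{-1})_{j^*i^*}}=\norm{A_{11}^{-1}}_C$ and let $B^*$ be the $(k-1)\times(k-1)$ submatrix of $A$ obtained by deleting row $i^*$ and column $j^*$ from $A_{11}$. By Lemma~\ref{lem:rowcolremove}, $\norm{A_{11}^{-1}}_C=\vol(B^*)/\vol(A_{11})$, so $B^*$ is nonsingular and $1/\norm{A_{11}^{-1}}_C=\vol(A_{11})/\vol(B^*)$. The key object is the Schur complement $A/B^*$, of dimension $(m-k+1)\times(n-k+1)$. Each of its entries is a quotient of a $k\times k$ minor of $A$ by $\vol(B^*)$: the entry in position $(p,q)$ equals, up to sign, $\vol(\hat M_{pq})/\vol(B^*)$, where $\hat M_{pq}$ is the $k\times k$ submatrix of $A$ built from the $k-1$ rows and $k-1$ columns of $B^*$ together with row $p$ and column $q$.

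The decisive step uses the maximum-volume property to bound every such minor. Each $\hat M_{pq}$ contains $B^*$ and is a $k\times k$ submatrix of the $(k+1)\times(k+1)$ block of $A$ on rows $\{1,\dots,k\}\cup\{p\}$ and columns $\{1,\dots,k\}\cup\{q\}$, which contains $A_{11}$. Hence Definition~\ref{def:maxvolume}(iii) yields $\vol(\hat M_{pq})\le\rho\,\vol(A_{11})$, so $\norm{A/B^*}_C\le\rho\,\vol(A_{11})/\vol(B^*)=\rho/\norm{A_{11}^{-1}}_C$. Converting to the spectral norm on the $(m-k+1)\times(n-k+1)$ matrix $A/B^*$ through $\norm{\cdot}_2\le\sqrt{(m-k+1)(n-k+1)}\,\norm{\cdot}_C$ produces exactly the dimensional factor. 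Finally, applying the lower Schur-complement bound of Corollary~\ref{cor:upperbound} (Pan's $\norm{A/B}_2\ge\sigma_{\ell+1}(A)$ for an $\ell\times\ell$ submatrix $B$) to the $(k-1)\times(k-1)$ block $B^*$ gives $\sigma_k(A)\le\norm{A/B^*}_2$, and chaining the three estimates delivers the scale-free inequality.

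The step I expect to be the main obstacle is the minor bound $\vol(\hat M_{pq})\le\rho\,\vol(A_{11})$: one must verify that for every admissible $(p,q)$—in particular when both a row and a column of $A_{11}$ are exchanged—the submatrix $\hat M_{pq}$ genuinely sits inside a $(k+1)\times(k+1)$ block containing $A_{11}$, so that the local maximum-volume hypothesis applies uniformly and with a single factor $\rho$. I would also dispatch the boundary cases separately: for $k=1$ the block $B^*$ is empty and $A/B^*=A$, recovering $\sigma_1(A)\le\sqrt{mn}\,\norm{A}_C\le\rho\sqrt{mn}\,\sigma_1(A_{11})$, whereas for $k=\min(m,n)$ no $(k+1)\times(k+1)$ block exists and one falls back on the single-swap comparisons of Definition~\ref{def:maxvolume}(ii).
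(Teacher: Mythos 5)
Your proof is correct and follows essentially the same route as the paper's: your $B^*$ (obtained by deleting the row and column of the largest entry of $A_{11}^{-1}$) is, by Lemma~\ref{lem:rowcolremove}, exactly a maximum-volume $(k-1)\times(k-1)$ submatrix of $A_{11}$, which is what the paper's proof chooses, and both arguments then bound the entries of the Schur complement $A/B$ via the local maximum-volume property, convert $\norm{\cdot}_C$ to $\norm{\cdot}_2$, and finish with Pan's bound $\sigma_k(A)\le\norm{A/B}_2$. The only cosmetic difference is that you close with the direct inequality $1/\norm{A_{11}^{-1}}_C\le k\,\sigma_k(A_{11})$ where the paper invokes Lemma~\ref{lem:upperbound} (with $\rho=1$, since $B$ has maximum volume in $A_{11}$) --- the same estimate in different packaging.
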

\begin{proof}
  If $k=1$, then $A_{11}$ is scalar and because of local $\rho$-maximum volume
  it satisfies $\rho\abs{A_{11}}\ge\norm{A}_C$. Therefore
  \begin{equation*}
    \sigma_1(A) \le \sqrt{mn}\norm{A}_C \le \sqrt{mn}\rho\abs{A_{11}} = \rho
    \sqrt{mn}\sigma_1(A_{11}).
  \end{equation*}
  If $k>1$, let $B$ be a $(k-1)\times(k-1)$ submatrix of $A_{11}$ with maximum
  volume in $A_{11}$. In particular $B$ is nonsingular. Consider any $k\times k$
  submatrix of $A$ of the form
  \begin{equation*}
    A_{11}'' = \bmat{B& \+b\\ \+c^T& \alpha}.
  \end{equation*}
  Because $A_{11}''$ differs from $A_{11}$ by at most one row and one column,
  and because $A_{11}$ has local $\rho$-maximum volume in $A$,
  \begin{equation*}
    \rho\vol(A_{11}) \ge \vol(A_{11}'').
  \end{equation*}
  From the determinant property of the Schur complement,
  \begin{equation*}
    \det(A_{11}) = \det(B) \det(A_{11}/B),
  \end{equation*}
  it follows that
  \begin{equation*}
    \rho \abs{A_{11}/B} = \rho \frac{\vol(A_{11})}{\vol(B)} \ge
    \frac{\vol(A_{11}'')}{\vol(B)} = \abs{A_{11}''/B}.
  \end{equation*}
  Since $A_{11}''/B$ is an entry of $A/B$ and each entry of $A/B$ has this form
  for a particular $A_{11}''$, it follows that
  \begin{equation*}
    \rho \abs{A_{11}/B} \ge \norm{A/B}_C.
  \end{equation*}
  Therefore
  \begin{align*}
    \sigma_k(A) \le \norm{A/B}_2 &\le \sqrt{(m-k+1)(n-k+1)} \norm{A/B}_C \\
    &\le \rho \sqrt{(m-k+1)(n-k+1)} \abs{A_{11}/B} \\ 
    &\le \rho \sqrt{(m-k+1)(n-k+1)} k \sigma_k(A_{11}),
  \end{align*}
  where the first inequality is from \cite[Theorem~2.7]{pan2000} and the last
  inequality from Lemma~\ref{lem:upperbound} and the fact that $B$ has maximum
  volume in $A_{11}$.
\end{proof}

\begin{corollary}
  \label{cor:lowerbound}
  Let $A\in\RR^{m\times n}$ and $A_{11}$ be a nonsingular $k\times k$ submatrix
  of local $\rho$-maximum volume. Then
  \begin{equation*}
    \sigma_k(A) \ge \sigma_{\min}(A_{11}) \ge
    \frac{1}{\rho k\sqrt{(m-k+1)(n-k+1)}} \sigma_k(A).
  \end{equation*}
\end{corollary}
\begin{proof}
  The first inequality comes from the interlacing property of singular
  values \cite[Corollary~8.6.3]{golub1996}.
  The second inequality follows from Lemma~\ref{lem:lowerbound}. 
\end{proof}

Theorem~\ref{thm:rankrev} follows from Corollaries
\ref{cor:lowerbound} and \ref{cor:upperbound}.

\section{Comparison to Pan's Method}
\label{sec:pan}

Pan \cite{pan2000} uses the maximum volume concept in a rank revealing
factorization algorithm based on Gaussian elimination, which yields a submatrix
$A_{11}$ that has very similar properties to the submatrix obtained from
Algorithm~\ref{alg:maxvolume}. This section compares the two methods regarding
their use of the maximum volume property and possible implementations.

Given $A\in\RR^{m\times n}$, $\rho\ge1$ and $k\le\rank(A)$, Pan's method first
chooses an $m\times k$ submatrix $A_\jset$ of local $\rho$-maximum volume in
$A$, and then a $k\times k$ submatrix $A_{11}$ of local $\rho$-maximum volume in
$A_\jset$. We say that $A_{11}$ has \emph{normal $\rho$-maximum volume} in $A$
to distinguish it from our definition of local maximum volume. Theorem~3.8 in
\cite{pan2000} proves the following bounds on the singular values for $m=n$,
which are almost identical to those in Theorem~\ref{thm:rankrev}:
\begin{align*}
  &\sigma_k(A) \ge \sigma_{\min}(A_{11}) \ge \frac{1}{k(n-k)\rho^2+1}
  \sigma_k(A), \\
  &\sigma_{k+1}(A) \le \norm{A/A_{11}}_2 \le\lt(k(n-k)\rho^2+1\rt)
  \sigma_{k+1}(A).
\end{align*}

Pan's method uses the normal matrix $A^TA$ to find a column subset of local
$\rho$-maximum volume in $A$. The algorithm applies symmetric row and column
interchanges to $A^TA$ until the volume of the leading $k\times k$ block cannot
be increased by more than a factor $\rho$ when interchanging one row and column.
The second step of Pan's method, finding a $k\times k$ submatrix of local
$\rho$-maximum volume in $A_\jset$, is the same task as finding the submatrix
$\AI_\B$ in our method. The setting in \cite{pan2000} assumes the dimension of
$A_{11}$ to be given. However, choosing it dynamically by means of a tolerance
$\beta$ as in Algorithm~\ref{alg:maxvolume} can be easily incorporated into the
algorithm for finding the column subset $\jset$. Therefore Pan's method and our
method provide the same functionality.

We consider it an advantage of our method not to use the normal matrix, but
instead to work with the augmented matrix $\AI$. For sparse matrices forming and
factorizing $A_\jset^TA_\jset$ usually leads to more fill-in and can be much
more expensive than factorizing $\AI_\B$.

It will be shown by two examples that normal maximum volume and local maximum
volume are different properties and neither implies the other. First, consider
\begin{equation*}
  A = \bmat{1&&&1\\ &1&&1\\ &&1&1\\ 1&1&1\\ 1&&&1\\ &1&&1\\ &&1&1}
\end{equation*}
and let $A_{11}$ be the leading $3\times3$ block. It can be computed
analytically that the singular values of any three columns of $A$ are
$\ls{\sqrt{5}, \sqrt{2}, \sqrt{2}}$, so that the first three columns have local
maximum volume in $A$. From Lemma~\ref{lem:colchange} it is obvious that
$A_{11}$ has local maximum volume within the first three columns. Hence it has
normal maximum volume in $A$. However, it can be verified from
Lemma~\ref{lem:rowcolchange} that $A_{11}$ does not have maximum volume in the
leading $4\times4$ block and therefore does not have local maximum volume in
$A$.

For the opposite part consider
\begin{equation}
  \label{eq:example2}
  A = \bmat{1&0&0\\ 0&1&0\\ d&-1&-d\\ -1&d&-d}
\end{equation}
with $d=0.99$ and let $A_{11}$ be the leading $2\times2$ block. It can be
verified from Lemma~\ref{lem:rowcolchange} that $A_{11}$ has local maximum
volume in $A$. By computing singular values we obtain the volume of the matrix
composed of columns 1 and 2 to be $2.2272$ and the volume of the matrix composed
of columns 1 and 3 to be $2.4169$.
Hence the first two columns do not have local maximum volume in $A$, and
$A_{11}$ does not have normal maximum volume in $A$.

More insight into the difference between normal and local maximum volume is
obtained from the characterization \cite[Example~2.1]{ben-israel1992} of the
volume of a rectangular matrix. Let $A_\jset\in\RR^{m\times k}$ have rank $k$.
Then
\begin{equation*}
  \vol(A_\jset) = \lt( \sum_{B} \vol(B)^2 \rt)^{1/2},
\end{equation*}
where the sum runs over all nonsingular $k\times k$ submatrices of $A_\jset$.
Hence a subset of $k$ columns has local maximum volume in $A\in\RR^{m\times n}$
if exchanging a column does not increase the ``Euclidean mean'' volume of its
$k\times k$ submatrices. In contrast, let $A_{11}$ have local maximum volume in
$A$ and $A_\jset$ be the column subset that contains $A_{11}$. Then exchanging a
column of $A_\jset$ does not increase $\vol(A_{11})$ or $\vol(B)$ for any $B$
that is neighbour to $A_{11}$ (i.\,e.\ $B$ is obtained by replacing one row of
$A_{11}$ by a row of $A_\jset$ not in $A_{11}$). This property is an immediate
consequence of the definition of local maximum volume.

Let $A_{21}$ denote the lower left $2\times2$ block in \eqref{eq:example2},
which is not neighbour to $A_{11}$. Exchanging columns 2 and 3 in $A$ changes
the volume of $A_{21}$ by a factor
\begin{equation*}
  \vol\lt(\bmat{d&-d\\-1&-d}\rt) / \vol\lt(\bmat{d&-1\\-1&d}\rt) = 99
\end{equation*}
and also increases the Euclidean mean volume of the $2\times2$ submatrix of
$A_\jset$. Therefore the first two columns do not have local maximum volume in
$A$.

\section{Implementation and Results}
\label{sec:results}

We have implemented a simplicial version of Algorithm~\ref{alg:maxvolume} in C
code\footnote{http://www.maths.ed.ac.uk/ERGO/LURank}. By ``simplicial'' we mean
that the implementation does not work on block submatrices and makes no use of
optimized BLAS. It therefore is slower than an optimized singular value
decomposition. Our interest is to examine the number of pivot operations
required and to verify the reliability of the method. Discussing an optimized
implementation is beyond the scope of the paper.

Initially the matrix $W=\bmat{A&I_m}$ is stored. The logical columns are not
explicitly scaled by $\beta$ to avoid values with very different order of
magnitude in the computation. Instead multiplications with $\beta$ and
$\beta^{-1}$ are applied on the fly when logical columns are involved.

In each iteration the algorithm chooses a pivot element in the following order:
\begin{enumerate}
\item If $\abs{W}$ has entries corresponding to block $A_{11}^{-1}$ in
  \eqref{eq:tbl} that are larger than $\rho\beta^{-1}$, then the
  maximum such entry is chosen as pivot.
\item If $\abs{W}$ has entries corresponding to block $A_{11}^{-1}A_{12}$ or
  $-A_{21}A_{11}^{-1}$ in \eqref{eq:tbl} that are larger than $\rho$, then
  the maximum such entry is chosen as pivot.
\item If $\abs{W}$ has entries corresponding to block $A/A_{11}$ in
  \eqref{eq:tbl} that are larger than $\rho\beta$, then the
  maximum such entry is chosen as pivot.
\end{enumerate}
The reason behind the order of choosing a pivot element is to prefer having
logical columns in the basis for numerical stability. If a pivot is found, then
its column is transformed into a unit column by applying row operations to $W$.
If none of the cases (i)--(iii) yields a pivot element, the algorithm
terminates.

The new rank revealing Gaussian elimination algorithm (RRGE) is evaluated on
matrices from the San Jose State University Singular Matrix Database
\cite{foster2018}. We use the 327 matrices (as of January 2018) for which
$\min(m,n)\le1000$. The matrices are transposed if necessary so that $m\le n$.
The parameters used are $\rho=2.0$ and $\beta$ as in \eqref{eq:tol}. For
comparison a singular value decomposition (SVD) of $A$ is computed and the
numerical rank of $A$ is determined as the largest index $s$ such that
\begin{equation}
  \label{eq:svdtol}
  \sigma_s(A) \ge \max(m,n) \eps_{\text{mach}} \sigma_1(A).
\end{equation}
All matrices in the test set are rank deficient by means of \eqref{eq:svdtol}.

For 56 matrices the numerical ranks determined by SVD and RRGE differ. This is
legitimate if there is no large gap between any two consecutive singular values.
To verify that the rank $r$ determined by RRGE is acceptable with respect to the
singular values of $A$, Figure~\ref{fig:comparison-svd} shows the ratios
$\sigma_r(A)/\sigma_s(A)$ and $\sigma_{r+1}(A)/\sigma_{s+1}(A)$ for those
matrices where $r\neq s$. Since the ratios are not too far away from $1.0$, it
can be concluded that
$\sigma_{r+1}(A)=\mathcal{O}(\sigma_{s+1}(A))$ and
$\sigma_r(A)=\Omega(\sigma_s(A))$ and therefore
the rank determined by RRGE is ``correct'' for all matrices in the test set.

\begin{figure}
  \centering
  \includegraphics[width=\textwidth]{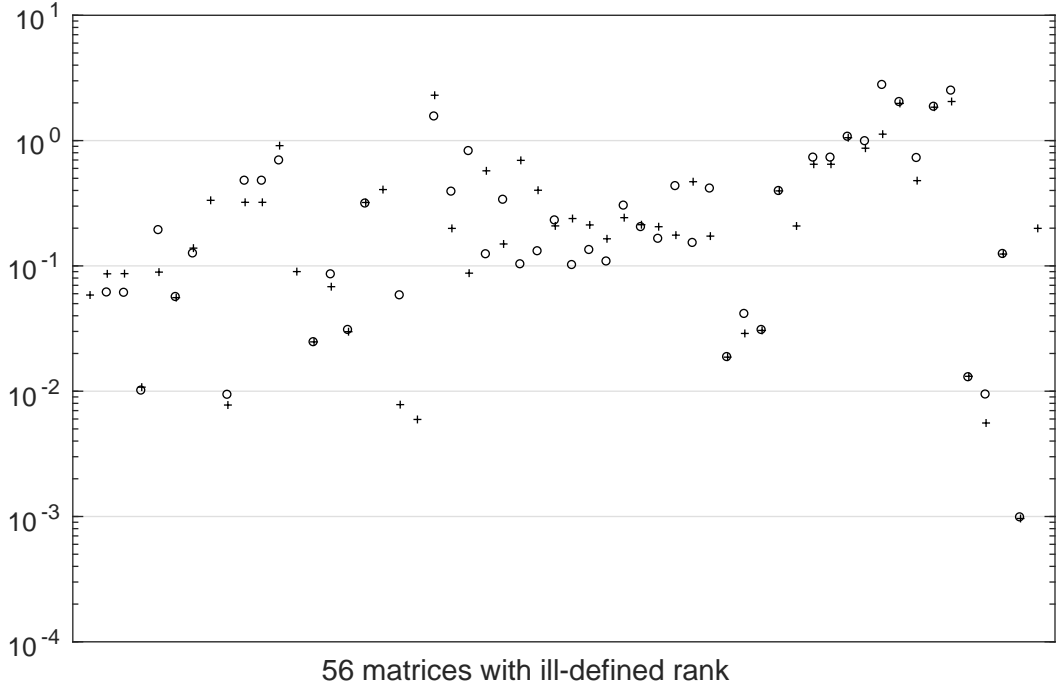}
  \caption{Ratios $\sigma_r(A)/\sigma_s(A)$ (``+'') and
    $\sigma_{r+1}(A)/\sigma_{s+1}(A)$ (``o'') for matrices with $r\neq s$. The
    ``o'' marker is missing when $r=m$ (7 matrices).}
  \label{fig:comparison-svd}
\end{figure}

Table~\ref{tbl:categorized} categorizes the 327 matrices into buckets by means
of $\sigma_r(A_{11})/\sigma_r(A)$ and by the number of pivot operations required
by RRGE. In most cases $\sigma_r(A_{11})$ is much closer to $\sigma_r(A)$ than
Corollary~\ref{cor:lowerbound} guarantees. Because our implementation starts
from the all logical basis, a minimum of $r$ pivots is required. For $\rho=2.0$
the number of pivots is almost always within 5\% of the optimum. The
computational cost for RRGE is roughly twice the cost of an $LU$ factorization
of $\AI$ with complete pivoting. For $\rho=1.1$ the number of pivots
significantly increases on many matrices, but the ratios
$\sigma_r(A_{11})/\sigma_r(A)$ do not improve relevantly.

\begin{table}
  \centering
  \begin{tabular}{ll}
    \begin{tabular}{l|r|r}
      $\sigma_r(A_{11})/\sigma_r(A)$ & $\rho=2.0$ & $\rho=1.1$ \\ \hline
      $(10^{-1}, 10^0]$ & 252 & 255\\
      $(10^{-2}, 10^{-1}]$ & 60 & 69\\
      $(10^{-3}, 10^{-2}]$ & 15 & 3 \\ \multicolumn{3}{c}{}
    \end{tabular}
    &
    \begin{tabular}{l|r|r}
      $\text{pivots}/r$ & $\rho=2.0$ & $\rho=1.1$ \\ \hline
      $[1.00,1.05)$ & 325 & 159 \\
        $[1.05,1.50)$ & 2 & 124 \\
          $[1.5,4.0)$ & 0 & 37 \\
            $[4.0,5.0)$ & 0 & 7
    \end{tabular}
  \end{tabular}
  \caption{Matrices categorized by $\sigma_r(A_{11})/\sigma_r(A)$ and by number
    of pivot operations.}
  \label{tbl:categorized}
\end{table}

\section{Conclusions}
\label{sec:conclusions}

We have presented an algorithm for revealing the numerical rank of $A$ by
Gaussian elimination on the matrix $\bmat{A&\beta I_m}$. The bounds on the
revealed singular values are very similar to those given in \cite{pan2000}, but
our algorithm does not make use of the normal matrix. A prototype implementation
has shown that the number of pivot operations required in practice is only
slightly larger than the rank of $A$. Because the algorithm allows some
flexibility in choosing pivot elements, it can be implemented with blocked
memory access to achieve high floating point performance. An advantage over the
singular value decomposition is to obtain a square nonsingular submatrix and
thereby a maximum set of linearly independent rows and columns. A rank revealing
factorization for sparse matrices based on the results from this paper is a
topic for further research.

\bibliography{rank_reveal}{}
\bibliographystyle{plain}
\end{document}